\documentclass{article}%
\usepackage{graphicx}
\usepackage[intlimits]{amsmath}
\usepackage{latexsym}
\usepackage{amsfonts}
\usepackage{amssymb}%
\setcounter{MaxMatrixCols}{30}
%TCIDATA{OutputFilter=latex2.dll}
%TCIDATA{Version=5.00.0.2552}
%TCIDATA{CSTFile=article_vn1.cst}
%TCIDATA{LastRevised=Tuesday, February 17, 2015 18:47:15}
%TCIDATA{<META NAME="GraphicsSave" CONTENT="32">}
%TCIDATA{<META NAME="SaveForMode" CONTENT="1">}
%TCIDATA{Language=American English}
%TCIDATA{ComputeDefs=
%1$\mu_{k}\left(  G\right)  +\mu_{n-k}$
%$\mu_{1}\left(  G\right)  $
%}
\setlength{\textwidth}{6.3in} \setlength{\textheight}{8.7in}
\setlength{\topmargin}{0pt} \setlength{\headsep}{0pt}
\setlength{\headheight}{0pt} \setlength{\oddsidemargin}{10pt}
\setlength{\evensidemargin}{0pt}
\makeatletter
\newfont{\footsc}{cmcsc10 at 8truept}
\newfont{\footbf}{cmbx10 at 8truept}
\newfont{\footrm}{cmr10 at 10truept}
\pagestyle{plain}
\newtheorem{theorem}{Theorem}

\newtheorem{corollary}[theorem]{Corollary}

\newtheorem{definition}[theorem]{Definition}

\newtheorem{lemma}[theorem]{Lemma}

\newtheorem{remark}[theorem]{Remark}

\newenvironment{proof}[1][Proof]{\noindent{\textbf {#1}  }}  {\hfill$\Box$\bigskip}

\begin{document}

\title{Some bounds on the eigenvalues of uniform hypergraphs}
\author{Xiying Yuan\thanks{Department of Mathematics, Shanghai University, Shanghai
200444, China; \textit{email: xiyingyuan2007@hotmail.com }}, Man
Zhang\thanks{Department of Mathematics, Shanghai University, Shanghai 200444,
China.}, Mei Lu\thanks{Corresponding author. Department of Mathematical
Sciences, Tsinghua University, Beijing, 100084; \textit{email:}
mlu@math.tsinghua.edu.cn}}
\maketitle

\begin{abstract}
Let $\mathcal{H}$ be a uniform hypergraph. Let $\mathcal{A(H)}$ and
$\mathcal{Q(H)}$ be the adjacency tensor and the signless Laplacian tensor of
$\mathcal{H}$, respectively. In this note we prove several bounds for the
spectral radius of $\mathcal{A(H)}$ and $\mathcal{Q(H)}$ in terms of the
degrees of vertices of $\mathcal{H}.$

\textbf{AMS classification: }\textit{15A42, 05C50}

\textbf{Keywords:}\textit{ hypergraph, adjacency tensor, signless Laplacian
tensor, spectral radius, bounds.}

\end{abstract}

\section{Introduction}

We denote the set $\{1,2,\cdot\cdot\cdot,n\}$ by $[n].$ Hypergraph is a
natural generalization of simple graph (see \cite{Berge}). A hypergraph
$\mathcal{H}=(V(\mathcal{H}),E(\mathcal{H}))$ on $n$ vertices is a set of
vertices, say $V(\mathcal{H})=\{1,2,\cdot\cdot\cdot,n\}$ and a set of edges,
say $E(\mathcal{H})=\{e_{1},e_{2},\cdot\cdot\cdot,e_{m}\},$ where
$e_{i}=\{i_{1},i_{2},\cdots,i_{l}\},i_{j}\in\lbrack n],$ $j=1,2,\cdots,l.$ If
$|e_{i}|=k$ for any $i=1,2,\cdot\cdot\cdot,m,$ then $\mathcal{H}$ is called a
$k$-uniform hypergraph. The degree $d_{i}$ of vertex $i$ is defined as
$d_{i}=|\{e_{j}:i\in e_{j}\in E(\mathcal{H})\}|.$ If $d_{i}=d$ for any vertex
$i$ of hypergraph $\mathcal{H},$ then $\mathcal{H}$ is called a $d$-regular
hypergraph. An order $k$ dimension $n$ tensor $\mathcal{T=}(\mathcal{T}%
_{i_{1}i_{2}\cdots i_{k}})\in\mathbb{C}^{n\times n\times\cdots\times n}$ is a
multidimensional array with $n^{k}$ entries, where $i_{j}\in\lbrack n]$ \ for
each $j=1,2,\cdot\cdot\cdot,k.$ To study the properties of uniform hypergraphs
by algebraic methods, adjacency matrix and signless Laplacian matrix of graph
are generalized to adjacency tenor and signless Laplacian tensor of uniform hypergraph.

\begin{definition}
\cite{HuQiXie} \cite{Qi2014}. Let $\mathcal{H}=(V(\mathcal{H}),E(\mathcal{H}%
))$ be a $k$-uniform hypergraph on $n$ vertices. The adjacency tensor of
$\mathcal{H}$ is defined as the $k$-$th$ order $n$-dimensional tensor
$\mathcal{A}(\mathcal{H})$ whose $(i_{1}\cdots i_{k})$-entry is:
\[
(\mathcal{A}(\mathcal{H}))_{i_{1}i_{2}\cdots i_{k}}=%
\begin{cases}
\frac{1}{(k-1)!} & \text{$\{i_{1},i_{2},\cdots,i_{k}\}\in E(\mathcal{H})$}\\
0 & \text{otherwise}.
\end{cases}
\]
Let $\mathcal{D(\mathcal{H}})$ be a $k$-$th$ order $n$-dimensional diagonal
tensor, with its diagonal entry $\mathcal{D}_{ii\cdots i}$ being $d_{i}$, the
degree of vertex $i$, for all $i\in\lbrack n]$. Then $\mathcal{Q(\mathcal{H}%
)}=\mathcal{D(\mathcal{H})}+\mathcal{A(\mathcal{H})}$ is the signless
Laplacian tensor of the hypergraph $\mathcal{H}$.
\end{definition}

The following general product of tensors, is defined in \cite{Shao} by Shao,
which is a generalization of the matrix case.

\begin{definition}
Let $\mathcal{A}\in\mathbb{C}^{n_{1}\times n_{2}\times\cdots\times n_{2}}$ and
$\mathcal{B}\in\mathbb{C}^{n_{2}\times n_{3}\times\cdots\times n_{k+1}}$ be
order $m\geq2$ and $k\geq1$ tensors, respectively. The product $\mathcal{AB}$
is the following tensor $\mathcal{C}$ of order $(m-1)(k-1)+1$ with entries:
\begin{equation}
\mathcal{C}_{i\alpha_{1}\cdots\alpha_{m-1}}=\sum_{i_{2},\cdots,i_{m}\in\lbrack
n_{2}]}\mathcal{A}_{ii_{2}\cdots i_{m}}\mathcal{B}_{i_{2}\alpha_{1}}%
\cdots\mathcal{B}_{i_{m}\alpha_{m-1}}. \label{1}%
\end{equation}
Where $i\in\lbrack n],\alpha_{1},\cdots,\alpha_{m-1}\in\lbrack n_{3}%
]\times\cdots\times\lbrack n_{k+1}]$.
\end{definition}

Let $\mathcal{T}$ be an order $k$ dimension $n$ tensor, let $x=(x_{1}%
,\cdot\cdot\cdot,x_{n})^{T}\in\mathbb{C}^{n}$ be a column vector of dimension
$n$. Then by (1) $\mathcal{T}x$ is a vector in $\mathbb{C}^{n}$ whose $i$th
component is as the following%

\begin{equation}
(\mathcal{T}x)_{i}=\sum_{i_{2},\cdots,i_{k}=1}^{n}\mathcal{T}_{ii_{2}\cdots
i_{k}}x_{i_{2}}\cdots x_{i_{k}}. \label{2}%
\end{equation}

Let $x^{[k]}=(x_{1}^{k}, \cdots, x_{n}^{k})^{T}$. Then (see \cite{ChangPZ}
\cite{Qi2014}) a number $\lambda\in\mathbb{C}$ is called an eigenvalue of the
tensor $\mathcal{T}$ if there exists a nonzero vector $x \in\mathbb{C}^{n}$
satisfying the following eigenequations%

\begin{equation}
\mathcal{T}x=\lambda x^{[k-1]}, \label{3}%
\end{equation}
and in this case, $x$ is called an eigenvector of $\mathcal{T}$ corresponding
to eigenvalue $\lambda$.

An eigenvalue of $\mathcal{T}$ is called an H-eigenvalue, if there exists a
real eigenvector corresponding to it (\cite{Qi2014}). The maximal absolute
value of eigenvalues of $\mathcal{T}$ is called the spectral radius of
$\mathcal{T}$ denoted by $\rho(\mathcal{T})$ (see \cite{Shao2}\ ).

In \cite{FGH}, the weak irreducibility of nonnegative tensors was defined. It
was proved in \cite{FGH} and \cite{Yang2014} that a $k$-uniform hypergraph
$\mathcal{H}$ is connected if and only if its adjacency tensor $\mathcal{A(H)}%
$ (and so $\mathcal{Q(H)}$) is weakly irreducible. They furthered proved the
following results, which implies that $\rho(\mathcal{T})$ is an H-eigenvalue
of $\mathcal{T}$ under some conditions.

\begin{lemma}
\cite{FGH}\cite{Yang2014} \label{Perron} Let $\mathcal{T}$ be a nonnegative
tensor. Then $\rho(\mathcal{T})$ is an H-eigenvalue of $\mathcal{T}$ with a
nonnegative eigenvector. Furthermore, if $\mathcal{T}$ is weakly irreducible,
then $\rho(\mathcal{T})$ has a positive eigenvector.
\end{lemma}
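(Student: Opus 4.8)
The statement is the Perron--Frobenius theorem for nonnegative tensors, and the plan is to prove it along the classical three-stage route: first for \emph{positive} tensors by a Brouwer fixed-point argument, then for an arbitrary nonnegative $\mathcal{T}$ by a perturbation-and-compactness limit, and finally, under weak irreducibility, by upgrading the nonnegative eigenvector so obtained to a strictly positive one. Throughout write $F(x)=\mathcal{T}x$ for the coordinatewise order-preserving map of $\mathbb{R}_{\ge 0}^{n}$ into itself given by \eqref{2}, homogeneous of degree $k-1$; since the eigenequation \eqref{3} is unchanged by multiplying $x$ by a positive scalar, it suffices to look for eigenvectors on the simplex $\Delta=\{x\ge 0:\sum_{i=1}^{n}x_{i}=1\}$. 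The first two stages should be routine; the third is the real obstacle.

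\textbf{Step 1 (positive tensors).} Suppose every entry of $\mathcal{T}$ is strictly positive. Then for each $x\in\Delta$ we have $(\mathcal{T}x)_{i}>0$ for all $i$ (choose $j$ with $x_{j}>0$; already the single term $\mathcal{T}_{ij\cdots j}x_{j}^{k-1}$ is positive), so
\[
G(x)=\bigl(\mathcal{T}x\bigr)^{[1/(k-1)]}\Big/\bigl\|(\mathcal{T}x)^{[1/(k-1)]}\bigr\|_{1}
\]
is a continuous map of the compact convex set $\Delta$ into its relative interior. Brouwer's fixed-point theorem then gives $x^{\ast}\in\Delta$ with $G(x^{\ast})=x^{\ast}$; the resulting relation $(\mathcal{T}x^{\ast})^{[1/(k-1)]}=\lambda^{1/(k-1)}x^{\ast}$ (with $\lambda>0$), raised to the power $k-1$, gives $\mathcal{T}x^{\ast}=\lambda(x^{\ast})^{[k-1]}$ with $x^{\ast}>0$. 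To identify $\lambda$ with $\rho(\mathcal{T})$: on one hand $\lambda$ is an eigenvalue, so $\lambda\le\rho(\mathcal{T})$; on the other, if $\mathcal{T}y=\mu y^{[k-1]}$, put $z=|y|$ (entrywise) and $t=\max_{i}z_{i}/x_{i}^{\ast}$, attained at some $i_{0}$, so $z\le tx^{\ast}$ and $z_{i_{0}}=tx_{i_{0}}^{\ast}$; then, using $\mathcal{T}\ge 0$ and the triangle inequality,
\[
|\mu|\,(tx_{i_{0}}^{\ast})^{k-1}=|(\mathcal{T}y)_{i_{0}}|\le(\mathcal{T}z)_{i_{0}}\le\bigl(\mathcal{T}(tx^{\ast})\bigr)_{i_{0}}=t^{k-1}\lambda(x_{i_{0}}^{\ast})^{k-1},
\]
so $|\mu|\le\lambda$; hence $\rho(\mathcal{T})\le\lambda$, and therefore $\rho(\mathcal{T})=\lambda$ is realised by the positive eigenvector $x^{\ast}$.

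\textbf{Step 2 (general nonnegative $\mathcal{T}$).} For $\varepsilon>0$ set $\mathcal{T}_{\varepsilon}=\mathcal{T}+\varepsilon\mathcal{E}$ with $\mathcal{E}$ the all-ones tensor, so $\mathcal{T}_{\varepsilon}$ is positive. By Step~1 there are $x_{\varepsilon}\in\Delta$, $x_{\varepsilon}>0$, with $\mathcal{T}_{\varepsilon}x_{\varepsilon}=\rho(\mathcal{T}_{\varepsilon})(x_{\varepsilon})^{[k-1]}$. The spectral radius is monotone in the entries of a nonnegative tensor, so $\varepsilon\mapsto\rho(\mathcal{T}_{\varepsilon})$ is nondecreasing and bounded below by $\rho(\mathcal{T})$, hence has a limit $\lambda^{\ast}\ge\rho(\mathcal{T})$ as $\varepsilon\downarrow 0$; by compactness of $\Delta$ pick $\varepsilon_{j}\downarrow 0$ with $x_{\varepsilon_{j}}\to x^{\ast}\in\Delta$. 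Passing to the limit in the eigenequation (each term is a polynomial in the entries and in $x$) yields $\mathcal{T}x^{\ast}=\lambda^{\ast}(x^{\ast})^{[k-1]}$ with $x^{\ast}\ge 0$, $x^{\ast}\ne 0$. Then $\lambda^{\ast}$ is an eigenvalue of $\mathcal{T}$, so $\lambda^{\ast}\le\rho(\mathcal{T})$, and together with the reverse inequality $\lambda^{\ast}=\rho(\mathcal{T})$; since $x^{\ast}$ is real and nonnegative, this is the first assertion.

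\textbf{Step 3 (weak irreducibility), the main obstacle.} Now assume $\mathcal{T}$ is weakly irreducible and let $x\ge 0$, $x\ne 0$, satisfy $\mathcal{T}x=\rho(\mathcal{T})x^{[k-1]}$; the task is to show $x>0$. This is the delicate part: a naive support-chase along the representation digraph of $\mathcal{T}$ (vertices $[n]$, an arc $i\to j$ whenever some entry $\mathcal{T}_{ii_{2}\cdots i_{k}}\ne 0$ has $j\in\{i_{2},\dots,i_{k}\}$) fails, because one nonzero entry ties the $k-1$ indices $i_{2},\dots,i_{k}$ together at once and a nonnegative eigenvector for a non-maximal eigenvalue may genuinely vanish somewhere, so the argument must use that $\rho(\mathcal{T})$ is the \emph{largest} eigenvalue. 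Put $W=\{i:x_{i}>0\}$ and suppose $W\ne[n]$. Restricting the eigenequation to the coordinates in $W$ (the terms with an index outside $W$ drop out) shows that $x|_{W}$ is a nonzero eigenvector of the principal subtensor $\mathcal{T}[W]$ with eigenvalue $\rho(\mathcal{T})$, so $\rho(\mathcal{T}[W])\ge\rho(\mathcal{T})$; conversely the zero-extension $\mathcal{S}$ of $\mathcal{T}[W]$ to dimension $n$ has $\mathcal{S}\le\mathcal{T}$ entrywise and $\rho(\mathcal{S})=\rho(\mathcal{T}[W])$ (zero-padding leaves the spectral radius unchanged), whence $\rho(\mathcal{T}[W])\le\rho(\mathcal{T})$ and so $\rho(\mathcal{S})=\rho(\mathcal{T})$. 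But $\mathcal{S}\lneq\mathcal{T}$, strictly along some entry straddling the cut $(W,[n]\setminus W)$ --- such an entry exists since strong connectivity of the representation digraph gives an arc from $W$ to $[n]\setminus W$ --- and here is the crux: the \emph{strict} monotonicity of the spectral radius under weak irreducibility, the primitivity-flavoured statement proved in \cite{FGH}, which I would invoke (equivalently one reduces to the classical Perron--Frobenius theorem for the irreducible nonnegative matrix that \cite{FGH} associate to $\mathcal{T}$, or appeals to nonlinear Perron--Frobenius theory for the order-preserving, degree-one homogeneous map $x\mapsto(\mathcal{T}x)^{[1/(k-1)]}$). This gives $\rho(\mathcal{S})<\rho(\mathcal{T})$, contradicting the equality above, so $W=[n]$ and $x>0$. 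Apart from this input, the one place in the first two stages that repays care is Step~1, namely checking that $G$ genuinely maps $\Delta$ into itself and that the eigenvalue at its fixed point is exactly $\rho(\mathcal{T})$ and not a smaller one.
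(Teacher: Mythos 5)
This lemma is quoted by the paper from \cite{FGH} and \cite{Yang2014} without proof, so there is no in-paper argument to compare against; what you have written is a reconstruction of the standard Chang--Pearson--Zhang/Yang--Yang/Friedland--Gaubert--Han development. Steps 1 and 2 are correct: the Brouwer fixed-point argument for positive tensors, the comparison with $t=\max_i z_i/x_i^{\ast}$ identifying the fixed-point eigenvalue with $\rho(\mathcal{T})$, and the perturbation $\mathcal{T}_{\varepsilon}=\mathcal{T}+\varepsilon\mathcal{E}$ followed by a compactness limit are exactly the standard route to the first assertion. (The monotonicity of $\rho(\mathcal{T}_{\varepsilon})$ in $\varepsilon$, which you assert without proof, does follow from your own Step 1 comparison because the dominating tensor $\mathcal{T}_{\varepsilon}$ is positive and hence has a positive eigenvector to compare against; it is worth saying so explicitly.)

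Step 3 is where the proof is not self-contained, and as set up your primary route is circular. You reduce positivity of the support to the implication ``$\mathcal{S}\lneq\mathcal{T}$ with $\mathcal{T}$ weakly irreducible implies $\rho(\mathcal{S})<\rho(\mathcal{T})$''. But the standard proofs of that strict comparison (for instance the equality analysis attached to Lemma \ref{increasing}, or Yang--Yang's version of it) run through the existence of a \emph{positive} eigenvector of the dominating weakly irreducible tensor --- which is precisely the second assertion of the lemma you are proving. Invoking strict monotonicity here therefore derives the statement from itself unless you supply an independent proof of strict monotonicity. The parenthetical alternatives you mention (reducing to the classical Perron--Frobenius theorem for the irreducible nonnegative matrix that \cite{FGH} associate to $\mathcal{T}$, or applying the nonlinear Perron--Frobenius theorem for order-preserving homogeneous maps to $x\mapsto(\mathcal{T}x)^{[1/(k-1)]}$) are the genuinely non-circular ways out and are what \cite{FGH} actually do, but you do not develop either; and the matrix reduction is not a one-liner, for exactly the reason you yourself flag, namely that an arc $i\to j$ of the representation digraph does not let you propagate positivity of $x_j$ to positivity of $(\mathcal{T}x)_i$ unless every other index of the witnessing entry already lies in the support. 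Since the paper itself only cites this lemma, deferring the hard step to \cite{FGH} is defensible, but then Step 3 should be presented honestly as a citation of their Theorem rather than as a proof, and the strict-monotonicity route should be dropped or independently justified.
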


Let $\mathcal{T}$ be a tensor of order $k$ and dimension $n$. For
$i=1,2,\cdots,n$, denote by%
\begin{equation}
r_{i}(\mathcal{T})=\sum_{i_{2},\cdots,i_{k}\in\lbrack n]}\mathcal{T}%
_{ii_{2}\cdots i_{k}}. \label{4}%
\end{equation}

For a nonnegative tensor $\mathcal{T}$ the following bound for $\rho
(\mathcal{T})$ in terms of $r_{i}(\mathcal{T})$ was proposed in
\cite{Yang2010}, and the conditions for the equal cases were studied in
\cite{KhanFan}.

\begin{lemma}
\cite{KhanFan}\cite{Yang2010} \label{rowsum} Let $\mathcal{T}$ be a
nonnegative tensor of dimension $n$. We have
\begin{equation}
\min_{1\leq i\leq n}r_{i}(\mathcal{T})\leq\rho(\mathcal{T})\leq\max_{1\leq
i\leq n}r_{i}(\mathcal{T}).
\end{equation}
Moreover, if $\mathcal{T}$ is weakly irreducible, then the equality in (5)
holds if and only if $r_{1}(\mathcal{T})=\cdots=r_{n}(\mathcal{T})$.
\end{lemma}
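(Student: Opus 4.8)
The statement is the standard Perron--Frobenius row--sum bound for nonnegative tensors, and the plan is to derive it from the Perron eigenvector supplied by Lemma~\ref{Perron}. For the upper bound, take a nonzero $x\ge 0$ with $\mathcal{T}x=\rho(\mathcal{T})x^{[k-1]}$ and pick $p$ with $x_p=\max_i x_i>0$. Reading off the $p$-th coordinate of the eigenequation and using $0\le x_{i_j}\le x_p$ in each of the $k-1$ factors,
\[
\rho(\mathcal{T})x_p^{k-1}=(\mathcal{T}x)_p=\sum_{i_2,\dots,i_k\in[n]}\mathcal{T}_{pi_2\cdots i_k}x_{i_2}\cdots x_{i_k}\le\Big(\sum_{i_2,\dots,i_k\in[n]}\mathcal{T}_{pi_2\cdots i_k}\Big)x_p^{k-1}=r_p(\mathcal{T})x_p^{k-1},
\]
so dividing by $x_p^{k-1}>0$ gives $\rho(\mathcal{T})\le r_p(\mathcal{T})\le\max_i r_i(\mathcal{T})$.

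For the lower bound I would first treat the weakly irreducible case, where Lemma~\ref{Perron} provides a strictly positive eigenvector $x>0$: taking $q$ with $x_q=\min_i x_i>0$ and running the same estimate with the inequalities reversed (now $x_{i_j}\ge x_q$) yields $\rho(\mathcal{T})\ge r_q(\mathcal{T})\ge\min_i r_i(\mathcal{T})$. For a general nonnegative $\mathcal{T}$ the Perron eigenvector may vanish on some coordinates, which blocks this estimate, so I would perturb: apply the weakly irreducible case to $\mathcal{T}_\varepsilon=\mathcal{T}+\varepsilon\mathcal{J}$, where $\mathcal{J}$ is the all-ones tensor of the same order and dimension, which is entrywise positive and hence weakly irreducible, to get $\rho(\mathcal{T}_\varepsilon)\ge\min_i r_i(\mathcal{T}_\varepsilon)=\min_i r_i(\mathcal{T})+\varepsilon n^{k-1}$, and then let $\varepsilon\to 0^+$, invoking continuity of the spectral radius of a nonnegative tensor in its entries.

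For the equality assertion under weak irreducibility, suppose $\rho(\mathcal{T})=\max_i r_i(\mathcal{T})=:M$ and let $x>0$ be the positive Perron eigenvector with $\mu:=\max_i x_i$. For any index $i$ with $x_i=\mu$, the chain $M\mu^{k-1}=(\mathcal{T}x)_i\le r_i(\mathcal{T})\mu^{k-1}\le M\mu^{k-1}$ collapses to equalities, forcing $r_i(\mathcal{T})=M$ and $\mathcal{T}_{ii_2\cdots i_k}(\mu^{k-1}-x_{i_2}\cdots x_{i_k})=0$ for every $(i_2,\dots,i_k)$; since each $x_{i_j}\le\mu$, a product of $k-1$ such factors can equal $\mu^{k-1}$ only if every factor equals $\mu$, so $\mathcal{T}_{ii_2\cdots i_k}>0$ implies $x_{i_2}=\dots=x_{i_k}=\mu$. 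Hence $\Omega:=\{i:x_i=\mu\}$ is nonempty and closed under the arc relation of the digraph associated with $\mathcal{T}$; weak irreducibility (strong connectedness of that digraph) forces $\Omega=[n]$, so $x$ is constant, and then the eigenequation gives $r_i(\mathcal{T})=\rho(\mathcal{T})$ for all $i$, i.e. $r_1(\mathcal{T})=\dots=r_n(\mathcal{T})$. The case $\rho(\mathcal{T})=\min_i r_i(\mathcal{T})$ is handled symmetrically, using the smallest coordinate of $x$ and the reversed inequalities. The converse is immediate: if $r_1(\mathcal{T})=\dots=r_n(\mathcal{T})$ then $\min_i r_i(\mathcal{T})=\max_i r_i(\mathcal{T})$, and the two displayed bounds squeeze $\rho(\mathcal{T})$ to that common value.

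The two bounds are routine once Lemma~\ref{Perron} is in hand; the parts needing genuine care are, first, that in the merely nonnegative (possibly reducible) setting the lower bound cannot be read off the Perron eigenvector directly --- hence the perturbation and continuity detour --- and, second, the forward direction of the equality case, where one must pass from the purely local identity ``a maximal coordinate makes all of its out-neighbours maximal'' to the global statement that $x$ is constant, which is exactly where strong connectedness of the associated digraph is used.
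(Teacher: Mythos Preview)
The paper does not supply its own proof of this lemma: it is quoted verbatim as a known result from \cite{Yang2010} and \cite{KhanFan}, with no argument given. So there is nothing in the paper to compare your proposal against.

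That said, your argument is the standard one and is correct. The upper bound via the maximal coordinate of a nonnegative Perron eigenvector, the lower bound via the minimal coordinate of a strictly positive Perron eigenvector in the weakly irreducible case, the perturbation-and-continuity detour to handle the general nonnegative case, and the propagation argument (``the set of maximal coordinates is out-closed in the associated digraph, hence all of $[n]$ by strong connectedness'') for the equality characterisation are exactly the ingredients used in the cited sources. The only point worth flagging is that your limiting step $\rho(\mathcal{T}_\varepsilon)\to\rho(\mathcal{T})$ silently uses continuity of the spectral radius in the tensor entries; this is true (via continuity of the characteristic polynomial and of the maximal root modulus), but it is a fact external to the present paper, so if you were writing this out formally you would want to cite it.
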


For adjacency tensor $\mathcal{A(H)}$ of $k$-uniform hypergraph $\mathcal{H}$
we have $r_{i}(\mathcal{A(H)})=d_{i},$ where $d_{i}$ is the degree of vertex
$i.$ Hence Lemma \ref{rowsum} implies the following result, which is an analog
of a classical theorem in spectral graph theory.

\begin{corollary}
\cite{Cooper} \label{degree} Let $\mathcal{H}$ be a $k$-uniform hypergraph
with maximum degree $\Delta.$ Then $\rho(\mathcal{A}(\mathcal{H}))\leq\Delta.$
\end{corollary}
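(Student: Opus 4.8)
The plan is to deduce the corollary directly from Lemma~\ref{rowsum}; the only substantive point is to verify that the row sums of the adjacency tensor are exactly the vertex degrees, i.e.\ $r_i(\mathcal{A}(\mathcal{H})) = d_i$ for every $i \in [n]$.

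First I would fix a vertex $i$ and unwind the definition~(4): $r_i(\mathcal{A}(\mathcal{H})) = \sum_{i_2,\ldots,i_k \in [n]} (\mathcal{A}(\mathcal{H}))_{i i_2 \cdots i_k}$. By the definition of the adjacency tensor, a summand is nonzero, and then equal to $\tfrac{1}{(k-1)!}$, precisely when $\{i,i_2,\ldots,i_k\} \in E(\mathcal{H})$; in particular this forces $i_2,\ldots,i_k$ to be pairwise distinct and distinct from $i$. Next I would group the nonzero summands according to which edge they arise from. For each edge $e_j$ with $i \in e_j$, the ordered tuples $(i_2,\ldots,i_k)$ satisfying $\{i,i_2,\ldots,i_k\} = e_j$ are exactly the $(k-1)!$ orderings of the $k-1$ vertices of $e_j \setminus \{i\}$, each contributing $\tfrac{1}{(k-1)!}$; distinct edges through $i$ give disjoint families of such tuples. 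Hence each edge containing $i$ contributes $(k-1)!\cdot\tfrac{1}{(k-1)!} = 1$ to the sum, and summing over the $d_i$ edges containing $i$ gives $r_i(\mathcal{A}(\mathcal{H})) = d_i$.

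Finally, since $\mathcal{A}(\mathcal{H})$ is a nonnegative tensor, Lemma~\ref{rowsum} applies and yields $\rho(\mathcal{A}(\mathcal{H})) \le \max_{1\le i\le n} r_i(\mathcal{A}(\mathcal{H})) = \max_{1\le i\le n} d_i = \Delta$, which is the assertion.

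I do not expect a genuine obstacle here: the argument is essentially a one-line consequence of Lemma~\ref{rowsum}, and the only step needing care is the combinatorial bookkeeping that the nonzero entries in row $i$ are partitioned by the edges through $i$ into blocks of size $(k-1)!$, each block summing to $1$. If one also wanted the equality characterization, namely that for connected $\mathcal{H}$ one has $\rho(\mathcal{A}(\mathcal{H})) = \Delta$ if and only if $\mathcal{H}$ is $\Delta$-regular, it would follow from the weak-irreducibility clause of Lemma~\ref{rowsum} together with the stated equivalence between connectedness of $\mathcal{H}$ and weak irreducibility of $\mathcal{A}(\mathcal{H})$.
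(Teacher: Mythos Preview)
Your proposal is correct and follows exactly the same approach as the paper, which simply notes that $r_i(\mathcal{A}(\mathcal{H}))=d_i$ and then invokes Lemma~\ref{rowsum}. You have merely supplied the (correct) combinatorial justification for the identity $r_i(\mathcal{A}(\mathcal{H}))=d_i$ that the paper states without proof.
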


In this note we first give a bound on $\rho(\mathcal{A}(\mathcal{H}))$ in
terms of degrees of vertices, which improves the bound as shown in Corollary
\ref{degree}. Some bounds on $\rho(\mathcal{Q}(\mathcal{H}))$ are also proved.

\section{Several bounds on $\rho(\mathcal{A}(\mathcal{H}))$ and $\rho
(\mathcal{Q}(\mathcal{H}))$}

Some techniques of this note are based on the facts that diagonal similar
tensors have the same spectra (see \cite{Shao}).

\begin{definition}
\cite{Shao} \cite{Yang2014} Let $\mathcal{A}$ and $\mathcal{B}$ be two order
$k$ dimension $n$ tensors. Suppose that there exists a nonsingular diagonal
matrix $D$ of order $n$ such that $\mathcal{B}=D^{k-1}\mathcal{A}D$, then
$\mathcal{A}$ and $\mathcal{B}$ are called diagonal similar.
\end{definition}

Theorem 2.1 of \cite{Shao} implied the following result for similar tensors,
thus for diagonal similar tensors.

\begin{lemma}
\label{similar} \cite{Shao} Let $\mathcal{A}$ and $\mathcal{B}$ be two order
$k$ dimension $n$ similar tensors. Then $\mathcal{A}$ and $\mathcal{B}$ have
the same spectra.
\end{lemma}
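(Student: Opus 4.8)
The plan is to establish the sharper fact that similar tensors have the same characteristic polynomial, which contains the statement about spectra. Recall that the spectrum of an order $k$ dimension $n$ tensor $\mathcal{T}$ is the multiset of roots of its characteristic polynomial $\phi_{\mathcal{T}}(\lambda)=\det(\lambda\mathcal{I}-\mathcal{T})$, where $\mathcal{I}$ is the unit tensor (the order $k$ dimension $n$ tensor with $\mathcal{I}x=x^{[k-1]}$ for every $x$) and $\det$ denotes the tensor determinant, i.e. the resultant $\mathrm{Res}$ of the homogeneous system $\mathcal{T}x=\lambda x^{[k-1]}$ of (\ref{3}). Hence it is enough to prove $\phi_{\mathcal{A}}=\phi_{\mathcal{B}}$ whenever $\mathcal{B}=P^{-(k-1)}\mathcal{A}P$ for a nonsingular matrix $P$, the two factors being inserted through the general product (\ref{1}); the diagonal similar tensors of the preceding definition arise when $P$ is diagonal.

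First I would extract from Theorem~2.1 of \cite{Shao} the associativity of the general product against a vector, $(M\mathcal{T})x=M(\mathcal{T}x)$ and $(\mathcal{T}M)x=\mathcal{T}(Mx)$ for any matrix $M$ and any $x$, so that $\mathcal{B}=P^{-(k-1)}\mathcal{A}P$ sends $x\mapsto P^{-(k-1)}\big(\mathcal{A}(Px)\big)$. Since the general product is linear in its tensor argument, this yields $\lambda\mathcal{I}-\mathcal{B}=P^{-(k-1)}(\lambda\mathcal{I}-\mathcal{A})P$, provided the similarity fixes the unit tensor, that is $P^{-(k-1)}\mathcal{I}P=\mathcal{I}$. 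For diagonal $P=D$ this is a one-line check: $(\mathcal{I}D)_{ii_2\cdots i_k}=d_i^{\,k-1}$ exactly when $i_2=\cdots=i_k=i$, and the left factor $D^{-(k-1)}$ rescales the $(i,i,\dots,i)$ entry back to $1$. Verifying this compatibility in the generality in which \cite{Shao} states similarity is the one genuinely non-formal ingredient, and it is precisely the point the eigenvector substitution alone cannot supply, because $P^{-(k-1)}(Px)^{[k-1]}=x^{[k-1]}$ fails for non-diagonal $P$.

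The computational core is the multiplicativity of the tensor determinant under the general product. Using that $\det(\mathcal{T})$ is homogeneous of degree $n(k-1)^{n-1}$ in the entries of $\mathcal{T}$, one gets, for a single left insertion and for $k-1$ right insertions of a matrix, $\det(M\mathcal{T})=(\det M)^{(k-1)^{n-1}}\det\mathcal{T}$ and $\det(\mathcal{T}M)=(\det M)^{(k-1)^{n}}\det\mathcal{T}$ (these follow from Theorem~2.1 of \cite{Shao} together with the transformation law of the resultant under the linear substitution $x\mapsto Mx$). Applying both to $\lambda\mathcal{I}-\mathcal{B}=P^{-(k-1)}(\lambda\mathcal{I}-\mathcal{A})P$ and using $\det\!\big(P^{-(k-1)}\big)=(\det P)^{-(k-1)}$ gives $\det(\lambda\mathcal{I}-\mathcal{B})=(\det P)^{-(k-1)(k-1)^{n-1}}(\det P)^{(k-1)^{n}}\det(\lambda\mathcal{I}-\mathcal{A})=(\det P)^{0}\det(\lambda\mathcal{I}-\mathcal{A})$, so the two powers of $\det P$ cancel and $\phi_{\mathcal{B}}=\phi_{\mathcal{A}}$ identically in $\lambda$. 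Equal characteristic polynomials force equal eigenvalues with equal multiplicities, and in particular $\rho(\mathcal{A})=\rho(\mathcal{B})$.

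I expect the main obstacle to be the middle step rather than the determinant bookkeeping: the cancellation of the $\det P$ exponents is automatic once the degree $n(k-1)^{n-1}$ is in hand, but the factorization $\lambda\mathcal{I}-\mathcal{B}=P^{-(k-1)}(\lambda\mathcal{I}-\mathcal{A})P$ hinges entirely on $P^{-(k-1)}\mathcal{I}P=\mathcal{I}$. As a lighter alternative that already yields equality of the eigenvalue sets (hence of the spectral radii used later), I would note that, granted this same identity, the linear bijection $u\mapsto P^{-1}u$ carries each eigenpair $(\lambda,u)$ of $\mathcal{A}$ to an eigenpair $(\lambda,P^{-1}u)$ of $\mathcal{B}$ and conversely, so no eigenvalue is created or lost; the determinant argument is what upgrades this to equality of the full spectra with multiplicities.
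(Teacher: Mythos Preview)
The paper gives no proof here; it simply records the lemma as a consequence of Theorem~2.1 of \cite{Shao}, and every later application (Theorems~\ref{main 1} and~\ref{signless Laplacian}) uses only the diagonal case $D^{-(k-1)}\mathcal{A}D$. Your proposal therefore supplies substantially more than the paper does, and the route you outline---factor $\lambda\mathcal{I}-\mathcal{B}=P^{-(k-1)}(\lambda\mathcal{I}-\mathcal{A})P$ and then use the multiplicative behaviour of the tensor determinant to cancel the $\det P$ factors---is exactly the mechanism behind the result in \cite{Shao}. The exponent cancellation $-(k-1)(k-1)^{n-1}+(k-1)^{n}=0$ is correct, and the lighter eigenvector-bijection argument in your last paragraph already yields $\rho(\mathcal{A})=\rho(\mathcal{B})$, which is all this paper ever extracts from the lemma.

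You are also right that the hinge is $P^{-(k-1)}\mathcal{I}P=\mathcal{I}$. Your one-line diagonal verification is correct and covers every use made here. Your caution about non-diagonal $P$ is well placed: the componentwise identity $P^{-(k-1)}(Px)^{[k-1]}=x^{[k-1]}$ genuinely fails for $k\ge 3$, so neither the factorization of $\lambda\mathcal{I}-\mathcal{B}$ nor the eigenvector-transport map $u\mapsto P^{-1}u$ extends verbatim to arbitrary invertible $P$. That gap does not affect anything in this paper, since only diagonal conjugation is invoked, but it does mean that the phrase ``similar tensors'' in the lemma should be read through \cite{Shao}'s own argument rather than as established in full generality by the plan you have written; for the diagonal case your proof is complete.
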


Now we introduce a special class of hypergraphs, whose spectral radius of the
adjacency tensor can be determined by Theorem \ref{main 1}. Let $\mathcal{G}%
_{0}$ be a $d$-regular $(k-1)$-uniform hypergraph on $n-1$ vertices. If
$\mathcal{G}$ is obtained from $\mathcal{G}_{0}$ by adding a new vertex $v$ to
each edge of $\mathcal{G}_{0}$, then we may call that $\mathcal{G}$ is a
blow-up of $\mathcal{G}_{0}$ and write $\mathcal{G}=\mathcal{G}_{0}(v)$.
Obviously, $\mathcal{G}$ is a $k$-uniform hypergraph on $n$ vertices with
$d_{v}=|E(\mathcal{G})|=|E(\mathcal{G}_{0})|$ and $d_{u}=d$ for any
$u\in(V(\mathcal{G})\setminus\{v\}).$ Let $K_{k-1}^{k-1}$ be the
($k-1)$-uniform hypergraph on $k-1$ vertices, and $tK_{k-1}^{k-1}$ be $t$
disjoint unions of $K_{k-1}^{k-1}.$ For example, the hyperstar $\mathcal{S}%
_{t(k-1)+1,\text{ }k}$ (see \cite{Hu2}) is a blow-up of $tK_{k-1}^{k-1}.$

\begin{theorem}
\label{main 1}Let $\mathcal{H}$ be a $k$-uniform hypergraph on $n$ vertices
with degree sequence $d_{1}\geq d_{2}\geq\cdots\geq d_{n}$. Let
$\mathcal{A(\mathcal{H})}$ be the adjacency tensor of $\mathcal{H}$. Then
\[
\rho(\mathcal{A}(\mathcal{H}))\leq d_{1}^{\frac{1}{k}}d_{2}^{1-\frac{1}{k}}.
\]
Equality holds if and only if $\mathcal{H}$ is a regular hypergraph, or
$\mathcal{H}\ $is a blow-up$\ $of some regular hypergraph.
\end{theorem}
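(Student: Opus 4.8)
The plan is to use the diagonal similarity trick from Lemma \ref{similar} together with the row-sum bound of Lemma \ref{rowsum}. Since spectra are preserved under diagonal similarity, I would look for a nonsingular diagonal matrix $D=\operatorname{diag}(x_1,\dots,x_n)$ such that the tensor $\mathcal{B}=D^{k-1}\mathcal{A}(\mathcal{H})D$ has all row sums bounded by $d_1^{1/k}d_2^{1-1/k}$; then $\rho(\mathcal{A}(\mathcal{H}))=\rho(\mathcal{B})\le\max_i r_i(\mathcal{B})\le d_1^{1/k}d_2^{1-1/k}$. From definition \eqref{4}, the $i$th row sum of $\mathcal{B}$ is $r_i(\mathcal{B})=x_i^{k-1}\sum_{\{i,i_2,\dots,i_k\}\in E(\mathcal{H})}\frac{1}{(k-1)!}\,x_{i_2}\cdots x_{i_k}\cdot(k-1)!$ — more carefully, summing $\mathcal{B}_{ii_2\cdots i_k}=x_i^{k-1}\mathcal{A}_{ii_2\cdots i_k}x_{i_2}\cdots x_{i_k}$ over $i_2,\dots,i_k$ and using that each edge through $i$ contributes $(k-1)!$ ordered tuples each with weight $\frac{1}{(k-1)!}$, we get $r_i(\mathcal{B})=x_i^{k-1}\sum_{e\ni i}\prod_{j\in e\setminus\{i\}}x_j$.

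The natural choice is a two-valued weighting: put $x_i=a$ on the vertices of maximum degree and $x_i=b$ on all the others, and then optimize $a,b$. Actually a cleaner route exploits the structure of the extremal case: I expect the right choice to reflect the blow-up picture, where one distinguished vertex $v$ lies in every edge and has degree $d_1$, and the remaining vertices have degree $d_2=d$. So I would try a vector that is constant on $V\setminus S$ and takes a (possibly different) value on a set $S$ of high-degree vertices, compute the resulting row sums, and choose the ratio of the two values to balance the two types of bound one obtains. Concretely, I would aim to show that with a suitable scalar $c$ one can guarantee $r_i(\mathcal{B})\le d_1^{1/k}d_2^{1-1/k}$ for every $i$, splitting into the case where $i$ has degree $d_1$ and the case of lower-degree vertices, and in each case bounding $\sum_{e\ni i}\prod_{j\in e\setminus\{i\}}x_j$ using $d_i\le d_1$ for the first and $d_i\le d_2$ for the second together with the product of weights of the other $k-1$ vertices in each edge. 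The AM–GM-type inequality $d_1^{1/k}d_2^{1-1/k}$ suggests the exponents $1/k$ and $(k-1)/k$ come from distributing one "$x_i^{k-1}$" factor carrying weight from the distinguished vertex among the $k$ slots of an edge.

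For the equality characterization, I would invoke the equality clause of Lemma \ref{rowsum}: since $\mathcal{A}(\mathcal{H})$ (hence $\mathcal{B}$) is weakly irreducible when $\mathcal{H}$ is connected, equality forces $r_1(\mathcal{B})=\cdots=r_n(\mathcal{B})$, and chasing back through the inequalities I used, every intermediate estimate must be tight. Tightness of the degree bounds forces each vertex to have degree exactly $d_1$ or exactly $d_2$, and tightness of the weight-product estimates forces the edge structure to be exactly that of a blow-up: either $d_1=d_2$ and $\mathcal{H}$ is regular, or there is a single vertex $v$ of degree $d_1$ contained in every edge and the link of $v$ is $d_2$-regular, i.e. $\mathcal{H}=\mathcal{G}_0(v)$. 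I would also need to reduce to the connected case first (the spectral radius of $\mathcal{A}(\mathcal{H})$ equals the maximum over connected components, and checking that the bound and equality conditions propagate correctly through a disjoint union, which is routine). The main obstacle I anticipate is pinning down the exact diagonal weighting — getting the constants and the split between the $S$-vertices and the rest to come out to precisely $d_1^{1/k}d_2^{1-1/k}$ — and then carefully extracting the equality conditions from the chain of inequalities, since several estimates (degree bounds, weight-product bounds, and the row-sum equalization) must simultaneously be tight and one must show these jointly characterize blow-ups of regular hypergraphs rather than some larger family.
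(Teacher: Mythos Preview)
Your overall strategy---diagonal similarity plus the row-sum bound of Lemma~\ref{rowsum}---is exactly the paper's approach. What is missing is the specific choice of weighting, and your guess of a two-valued scheme on a \emph{set} $S$ of high-degree vertices is a detour. The paper's computation works by singling out \emph{only the one vertex} of degree $d_1$ (after handling $d_1=d_2$ separately). Take $P=\operatorname{diag}(x,1,\dots,1)$ and work with $P^{-(k-1)}\mathcal{A}P$; note the exponent is $-(k-1)$, so the $i$th row sum is $x_i^{-(k-1)}\sum_{e\ni i}\prod_{j\in e\setminus\{i\}}x_j$, not $x_i^{k-1}(\cdots)$ as you wrote. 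One then gets $r_1=d_1/x^{k-1}$, while for $i\ge 2$, writing $d_{\{1,i\}}$ for the number of edges containing both $1$ and $i$,
\[
r_i \;=\; x\,d_{\{1,i\}} + (d_i - d_{\{1,i\}}) \;\le\; x\,d_i \;\le\; x\,d_2,
\]
since $x>1$. Equating $d_1/x^{k-1}=x\,d_2$ forces $x=(d_1/d_2)^{1/k}$ and yields the bound $d_1^{1/k}d_2^{1-1/k}$ at once---no AM--GM or further optimization needed.

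For the equality case, your plan to trace tightness is right, but the decisive constraint is not ``every vertex has degree $d_1$ or $d_2$''; it is the first inequality above becoming $d_{\{1,i\}}=d_i$ for every $i\ge 2$. That says every edge through $i$ also contains vertex~$1$, hence vertex~$1$ lies in \emph{every} edge, and together with $d_i=d_2$ for all $i\ge 2$ this is precisely the blow-up structure $\mathcal{H}=\mathcal{H}_0(v)$ with $\mathcal{H}_0$ a $d_2$-regular $(k-1)$-uniform hypergraph. Your remark about reducing to connected components before invoking the equality clause of Lemma~\ref{rowsum} is a sensible precaution that the paper leaves implicit.
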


\begin{proof}
Write $\mathcal{A=A}(\mathcal{H})$ for short.

(1). If $d_{1}=d_{2}$, by Lemma \ref{rowsum}, we have
\[
\rho(\mathcal{A})\leq\max_{1\leq i\leq n}r_{i}(\mathcal{A})=\max_{1\leq i\leq
n}d_{i}=d_{1}=d_{1}^{\frac{1}{k}}d_{2}^{1-\frac{1}{k}}.
\]
Equality holds if and only if $r_{i}(\mathcal{A})$ is a constant. So
$\mathcal{H}$ is a regular hypergraph.

(2). Now we suppose that $d_{1}>d_{2}$ holds. If $P$ is a diagonal matrix,
then by (1), we have
\[
(P^{-(k-1)}\mathcal{A}P)_{i_{1}i_{2}\cdots i_{k}}=P_{i_{1}i_{1}}%
^{-(k-1)}\mathcal{A}_{i_{1}i_{2}\cdots i_{k}}P_{i_{2}i_{2}}\cdots
P_{i_{k}i_{k}}.
\]
Now take $P=diag(x,1,\cdots,1)$ with $x>1$. Then we have%
\[%
\begin{split}
r_{1}(P^{-(k-1)}\mathcal{A}P)  &  =\sum_{i_{2},\cdots,i_{k}\in\lbrack
n]}(P^{-(k-1)}\mathcal{A}P)_{1i_{2}\cdots i_{k}}\\
&  =\sum_{i_{2},\cdots,i_{k}\in\lbrack n]}P_{11}^{-(k-1)}\mathcal{A}%
_{1i_{2}\cdots i_{k}}P_{i_{2}i_{2}}\cdots P_{i_{k}i_{k}}\\
&  =\frac{1}{x^{k-1}}\sum_{i_{2},\cdots,i_{k}\in\lbrack n]}\mathcal{A}%
_{1i_{2}\cdots i_{k}}\\
&  =\frac{d_{1}}{x^{k-1}}.
\end{split}
\]
Denote by $d_{\{1,i\}}$ the number of edges, which contain vertices both $1$
and $i,$ i.e.,
\[
d_{\{1,i\}}=|\{e_{j}:\{1,i\}\subset e_{j}\in E(\mathcal{H})\}|.
\]
For $2\leq i\leq n$, we have%

\[%
\begin{split}
r_{i}(P^{-(k-1)}\mathcal{A}P)  &  =\sum_{i_{2},\cdots,i_{k}\in\lbrack
n]}(P^{-(k-1)}\mathcal{A}P)_{ii_{2}\cdots i_{k}}\\
&  =\sum_{i_{2},\cdots,i_{k}\in\lbrack n]}P_{ii}^{-(k-1)}\mathcal{A}%
_{ii_{2}\cdots i_{k}}P_{i_{2}i_{2}}\cdots P_{i_{k}i_{k}}\\
&  =\sum
_{\begin{subarray}{i} i_2,\cdots ,i_k\in [n]\\1\in \{i_2,\cdots ,i_k\} \end{subarray}}%
P_{ii}^{-(k-1)}\mathcal{A}_{ii_{2}\cdots i_{k}}P_{i_{2}i_{2}}\cdots
P_{i_{k}i_{k}}+\sum
_{\begin{subarray}{i} i_2,\cdots ,i_k\in [n]\\1 \not \in \{i_2,\cdots ,i_k\} \end{subarray}}%
P_{ii}^{-(k-1)}\mathcal{A}_{ii_{2}\cdots i_{k}}P_{i_{2}i_{2}}\cdots
P_{i_{k}i_{k}}\\
&  =xd_{\{1,i\}}+d_{i}-d_{\{1,i\}}\\
&  \leq xd_{i}\\
&  \leq xd_{2}.
\end{split}
\]
Noting that $d_{1}>d_{2},$ if we take
\[
x=(\frac{d_{1}}{d_{2}})^{\frac{1}{k}},
\]
then $x>1,$ and
\[
r_{1}(P^{-(k-1)}\mathcal{A}P)=d_{1}^{\frac{1}{k}}d_{2}^{1-\frac{1}{k}},
\]
for $2\leq i\leq n,$%
\[
r_{i}(P^{-(k-1)}\mathcal{A}P)\leq xd_{2}=d_{1}^{\frac{1}{k}}d_{2}^{1-\frac
{1}{k}}.
\]
Thus for each $1\leq i\leq n,$ we have
\[
r_{i}(P^{-(k-1)}\mathcal{A}P)\leq d_{1}^{\frac{1}{k}}d_{2}^{1-\frac{1}{k}}.
\]
\newline Then by Lemma \ref{rowsum},
\[
\rho(P^{-(k-1)}\mathcal{A}P)\leq\max_{1\leq i\leq n}r_{i}(P^{-(k-1)}%
\mathcal{A}P)=d_{1}^{\frac{1}{k}}d_{2}^{1-\frac{1}{k}}.
\]
\newline Furthermore, by Lemma \ref{similar}, we have
\begin{equation}
\rho(\mathcal{A})=\rho(P^{-(k-1)}\mathcal{A}P)\leq d_{1}^{\frac{1}{k}}%
d_{2}^{1-\frac{1}{k}}. \label{bound}%
\end{equation}

If the equality in (\ref{bound}) holds we have $d_{\{1,i\}}=d_{i}$, and
$d_{2}=d_{3}=\cdots=d_{n}$. The condition $d_{\{1,i\}}=d_{i}$ implies that any
edge containing vertex $i$ contains vertex $1,$ so $d_{1}$ equals to the
number of edges of $\mathcal{H}$. Concerning that $d_{2}=d_{3}=\cdots=d_{n},$
then $\mathcal{H\ }$is a blow-up of a $d_{2}$-regular and $(k-1)$-uniform hypergraph.

On the other hand, if $\mathcal{H=H}_{0}(v),$ where $\mathcal{H}_{0}$ is a
$d_{2}$-regular and $(k-1)$-uniform hypergraph, we take
\[
P=diag((\frac{d_{1}}{d_{2}})^{\frac{1}{k}},1,\cdots,1).
\]
Then for each $1\leq i$ $\leq n,$ we have
\[
r_{i}(P^{-(k-1)}\mathcal{A}P)=d_{1}^{\frac{1}{k}}d_{2}^{1-\frac{1}{k}},
\]
and Lemma \ref{rowsum} and Lemma \ref{similar} implies that
\[
\rho(\mathcal{A})=\rho(P^{-(k-1)}\mathcal{A}P)=d_{1}^{\frac{1}{k}}%
d_{2}^{1-\frac{1}{k}}.
\]

\end{proof}

\begin{lemma}
\label{increasing}Let $\mathcal{A}$ and $\mathcal{B}$ be two order $k$
dimension $n$ tensors satisfying $|\mathcal{A}|$ $\leq\mathcal{B},$ where
$\mathcal{B}$ is weakly irreducible. Let $\lambda$ be an eigenvalue of
$\mathcal{A}$. \ Then $|\lambda|\leq\rho(\mathcal{B}).$
\end{lemma}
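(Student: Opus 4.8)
The plan is to reduce this comparison statement to the Perron--Frobenius-type result for nonnegative weakly irreducible tensors (Lemma~\ref{Perron}) by passing to absolute values and then squeezing. Let $\lambda$ be an eigenvalue of $\mathcal{A}$ with eigenvector $y\neq 0$, so $\mathcal{A}y=\lambda y^{[k-1]}$, that is, $(\mathcal{A}y)_i=\lambda y_i^{k-1}$ for every $i$. First I would write $|y|=(|y_1|,\dots,|y_n|)^T$ and apply the triangle inequality entrywise: from $(\mathcal{A}y)_i=\sum_{i_2,\dots,i_k}\mathcal{A}_{ii_2\cdots i_k}y_{i_2}\cdots y_{i_k}$ we get
\[
|\lambda|\,|y_i|^{k-1}=|(\mathcal{A}y)_i|\le\sum_{i_2,\dots,i_k}|\mathcal{A}_{ii_2\cdots i_k}|\,|y_{i_2}|\cdots|y_{i_k}|\le\sum_{i_2,\dots,i_k}\mathcal{B}_{ii_2\cdots i_k}|y_{i_2}|\cdots|y_{i_k}|=(\mathcal{B}|y|)_i,
\]
using $|\mathcal{A}|\le\mathcal{B}$ in the last inequality. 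Thus $|\lambda|\,|y|^{[k-1]}\le\mathcal{B}|y|$ entrywise, with $|y|$ a nonnegative nonzero vector.

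Next I would turn this coordinatewise inequality into a bound on $|\lambda|$ via the spectral radius of $\mathcal{B}$. Since $\mathcal{B}$ is nonnegative and weakly irreducible, Lemma~\ref{Perron} gives a \emph{positive} eigenvector $z>0$ of $\mathcal{B}^{T}$ (or one argues directly with the Collatz--Wielandt characterization): the point is to pair the inequality against a suitable positive test vector so that the $\mathcal{B}$-term collapses to $\rho(\mathcal{B})$ times a scalar. Concretely, the standard route is to invoke the min-max (Collatz--Wielandt) formula for nonnegative tensors, which states $\rho(\mathcal{B})=\max_{w>0}\min_i (\mathcal{B}w)_i/w_i^{k-1}$, or equivalently that if a nonnegative nonzero vector $u$ satisfies $\mu\, u^{[k-1]}\le \mathcal{B}u$ then $\mu\le\rho(\mathcal{B})$. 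Applying this with $u=|y|$ and $\mu=|\lambda|$ yields $|\lambda|\le\rho(\mathcal{B})$ immediately. If one prefers to avoid quoting Collatz--Wielandt and stay within the lemmas already stated, an alternative is a diagonal-scaling / limiting argument: for $\epsilon>0$ the tensor $\mathcal{B}+\epsilon\mathcal{J}$ (with $\mathcal{J}$ all ones, say) is positive hence certainly weakly irreducible, and one can compare $|\lambda|$ against its row sums after a suitable positive diagonal similarity chosen so that the scaled row sums of $\mathcal{B}$ are all equal to $\rho(\mathcal{B})$; then let $\epsilon\to 0$.

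The main obstacle is precisely this last inferential step: promoting the entrywise inequality $|\lambda|\,|y|^{[k-1]}\le\mathcal{B}|y|$ to the scalar conclusion $|\lambda|\le\rho(\mathcal{B})$. In the matrix case this is the classical fact that $\rho$ is monotone on nonnegative matrices and dominates such "subinvariant" vectors; in the tensor setting it requires either the Collatz--Wielandt variational description of $\rho(\mathcal{B})$ for nonnegative (weakly irreducible) tensors, or a careful perturbation-and-limit argument using the positivity of the Perron vector from Lemma~\ref{Perron}. A subtlety to watch is that $|y|$ need not be strictly positive—some coordinates $|y_i|$ may vanish—so one must make sure the chosen test vector or variational formula tolerates zero coordinates, which is why pairing against a strictly positive left eigenvector of $\mathcal{B}$ (guaranteed by weak irreducibility) is the cleanest approach. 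Once that step is secured, the proof is complete. \proofend
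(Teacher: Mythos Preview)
The paper does not prove Lemma~\ref{increasing}; it is stated without proof or explicit citation, presumably as a known Perron--Frobenius comparison result for nonnegative tensors (cf.\ the Yang--Yang references \cite{Yang2010,Yang2011} in the bibliography). Your outline is the standard argument and is correct: from $\mathcal{A}y=\lambda y^{[k-1]}$ one obtains $|\lambda|\,|y|^{[k-1]}\le |\mathcal{A}|\,|y|\le \mathcal{B}\,|y|$ entrywise, and then the Collatz--Wielandt characterization of $\rho(\mathcal{B})$ for a weakly irreducible nonnegative tensor converts this subinvariance into $|\lambda|\le\rho(\mathcal{B})$.

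One caution: the ``pair against a positive left eigenvector'' heuristic you mention is a matrix idea and does not transfer cleanly to order $k\ge 3$ tensors, since there is no bilinear pairing $z^{T}(\mathcal{B}|y|)$ that collapses to $\rho(\mathcal{B})$ times a scalar in the way it does for $k=2$. The Collatz--Wielandt route you also propose is the correct tensor replacement, and the max--min formula taken over nonnegative nonzero vectors indeed tolerates zero coordinates in $|y|$, so the ``obstacle'' you flag is not an obstacle once you commit to that formulation. In short, there is no paper proof to compare against, but your sketch is complete once you drop the left-eigenvector detour and go straight through Collatz--Wielandt.
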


\begin{corollary}
\label{lower bound}Let $\mathcal{H}$ be a connected $k$-uniform hypergraph on
$n$ vertices with degree sequence $d_{1}\geq\cdots\geq d_{n}$. Let
$\mathcal{Q(\mathcal{H})}$ be the signless Laplacian tensor of $\mathcal{H}$. Then

(1) $\rho(\mathcal{Q}(\mathcal{H}))\geq d_{1};$

(2) $\rho(\mathcal{Q}(\mathcal{H}))\leq d_{1}+d_{1}^{\frac{1}{k}}%
d_{2}^{1-\frac{1}{k}},$ equality holds if and only if $\mathcal{H}$ is a
regular hypergraph.
\end{corollary}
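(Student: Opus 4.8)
For part (1), the plan is to use the Perron-type result (Lemma \ref{Perron}) together with the row-sum bound. Since $\mathcal{H}$ is connected, $\mathcal{Q}(\mathcal{H})$ is weakly irreducible and nonnegative, so $\rho(\mathcal{Q}(\mathcal{H}))$ is attained. The cleanest route is to apply Lemma \ref{rowsum} to $\mathcal{Q}(\mathcal{H})$: one computes $r_i(\mathcal{Q}(\mathcal{H})) = d_i + r_i(\mathcal{A}(\mathcal{H})) = d_i + d_i = 2d_i$, so $\rho(\mathcal{Q}(\mathcal{H})) \le 2d_1$, which is weaker than wanted. Instead, to get the lower bound $d_1$ I would use a diagonal-similarity / test-vector argument, or more directly observe that $\mathcal{Q}(\mathcal{H}) = \mathcal{D}(\mathcal{H}) + \mathcal{A}(\mathcal{H})$ with both summands nonnegative; evaluating the variational expression on the positive Perron eigenvector $x$ of $\mathcal{A}(\mathcal{H})$, or simply noting that for the standard basis vector $e_1$ one gets $(\mathcal{Q}(\mathcal{H}) e_1)_1 = d_1 = d_1 \cdot 1^{k-1}$ while other components are nonnegative, lets one invoke Lemma \ref{rowsum} after a diagonal scaling that concentrates weight on vertex $1$; letting the scaling parameter tend to its extreme shows $\rho \ge d_1$. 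Alternatively, and most simply: take a connected component and use that $\mathcal{Q}$ dominates the diagonal tensor $\mathcal{D}$ entrywise, and $\rho(\mathcal{D}) = d_1$ — combined with monotonicity of the spectral radius for nonnegative tensors (a consequence of Lemma \ref{increasing} applied with a suitable weakly irreducible majorant, or directly from the Collatz–Wielandt characterization), this yields $\rho(\mathcal{Q}(\mathcal{H})) \ge \rho(\mathcal{D}(\mathcal{H})) = d_1$.

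For part (2), the plan is to mimic the diagonal-similarity argument in the proof of Theorem \ref{main 1}, now applied to $\mathcal{Q}(\mathcal{H}) = \mathcal{D}(\mathcal{H}) + \mathcal{A}(\mathcal{H})$. Note first that diagonal similarity $P^{-(k-1)} \mathcal{Q}(\mathcal{H}) P$ leaves the diagonal tensor $\mathcal{D}(\mathcal{H})$ unchanged (diagonal entries $P_{ii}^{-(k-1)} d_i P_{ii}^{k-1} = d_i$), so $P^{-(k-1)}\mathcal{Q}(\mathcal{H})P = \mathcal{D}(\mathcal{H}) + P^{-(k-1)}\mathcal{A}(\mathcal{H})P$ and hence $r_i(P^{-(k-1)}\mathcal{Q}(\mathcal{H})P) = d_i + r_i(P^{-(k-1)}\mathcal{A}(\mathcal{H})P)$. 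If $d_1 = d_2$, then $r_i \le d_i + d_i \le 2d_1 = d_1 + d_1^{1/k}d_2^{1-1/k}$ directly from Lemma \ref{rowsum} (no scaling needed), with equality forcing all $r_i$ equal, i.e. $\mathcal{H}$ regular. If $d_1 > d_2$, take $P = \operatorname{diag}(x,1,\dots,1)$ with $x = (d_1/d_2)^{1/k} > 1$ exactly as in Theorem \ref{main 1}; the computation there gives $r_1(P^{-(k-1)}\mathcal{A}(\mathcal{H})P) = d_1^{1/k}d_2^{1-1/k}$ and $r_i(P^{-(k-1)}\mathcal{A}(\mathcal{H})P) \le d_1^{1/k}d_2^{1-1/k}$ for $i \ge 2$. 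Adding $d_i$ to each: for $i=1$ we get $d_1 + d_1^{1/k}d_2^{1-1/k}$; for $i \ge 2$ we get $d_i + (\text{something} \le d_1^{1/k}d_2^{1-1/k}) \le d_2 + d_1^{1/k}d_2^{1-1/k} \le d_1 + d_1^{1/k}d_2^{1-1/k}$ since $d_i \le d_2 \le d_1$. Thus every row sum is at most $d_1 + d_1^{1/k}d_2^{1-1/k}$, and Lemma \ref{rowsum} together with Lemma \ref{similar} gives the bound.

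For the equality discussion in part (2): I expect the main subtlety to be showing that equality forces regularity rather than merely a blow-up (in contrast to Theorem \ref{main 1}). When $d_1 > d_2$, equality in Lemma \ref{rowsum} requires all row sums of $P^{-(k-1)}\mathcal{Q}(\mathcal{H})P$ to equal $d_1 + d_1^{1/k}d_2^{1-1/k}$; for $i \ge 2$ this needs $d_i = d_2$ \emph{and} $r_i(P^{-(k-1)}\mathcal{A}(\mathcal{H})P) = d_1^{1/k}d_2^{1-1/k}$, and the latter equality in the $\mathcal{A}$-computation forced $d_{\{1,i\}} = d_i$, i.e. every edge through $i$ passes through vertex $1$; but then vertex $1$ lies in every edge, forcing $d_1 = |E(\mathcal{H})|$, while $d_2 = d_3 = \cdots = d_n$ with each such vertex in $d_2$ edges all containing $1$. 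The point is that the row-$1$ equality now reads $d_1 + d_1^{1/k}d_2^{1-1/k} = d_1 + d_1^{1/k}d_2^{1-1/k}$ automatically — so no new constraint there — hence one must show this blow-up configuration actually \emph{fails} to achieve equality in $\rho$ unless it is regular. The resolution is that for such a blow-up $\mathcal{H} = \mathcal{H}_0(v)$ the tensor $\mathcal{Q}(\mathcal{H})$ is weakly irreducible but $P^{-(k-1)}\mathcal{Q}(\mathcal{H})P$ does \emph{not} have all row sums equal (row $1$ gives $d_1 + d_1^{1/k}d_2^{1-1/k}$ but row $i \ge 2$ gives $d_2 + x d_2 = d_2 + d_1^{1/k}d_2^{1-1/k} < d_1 + d_1^{1/k}d_2^{1-1/k}$ precisely because $d_2 < d_1$), so by the equality clause of Lemma \ref{rowsum} the inequality $\rho(\mathcal{Q}(\mathcal{H})) < d_1 + d_1^{1/k}d_2^{1-1/k}$ is strict. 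Therefore equality in (2) can only occur in the $d_1 = d_2$ case, and there it forces $\mathcal{H}$ regular. I would write this last step carefully, since it is the one place where the argument genuinely diverges from the template set by Theorem \ref{main 1}.
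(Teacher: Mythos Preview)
Your argument is correct, but for part~(2) you take a different route from the paper. The paper does not redo the diagonal-similarity computation on $\mathcal{Q}(\mathcal{H})$; instead it bounds $\mathcal{Q}(\mathcal{H}) \le d_1\mathcal{I} + \mathcal{A}(\mathcal{H})$ entrywise, invokes Lemma~\ref{increasing} to get $\rho(\mathcal{Q}(\mathcal{H})) \le \rho(d_1\mathcal{I} + \mathcal{A}(\mathcal{H}))$, observes that adding $d_1\mathcal{I}$ shifts every eigenvalue by $d_1$ so that $\rho(d_1\mathcal{I}+\mathcal{A}(\mathcal{H})) = d_1 + \rho(\mathcal{A}(\mathcal{H}))$, and then applies Theorem~\ref{main 1} as a black box. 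The equality case falls out immediately: equality in the first step (via the strict form of Lemma~\ref{increasing} for weakly irreducible majorants) forces $d_1 = d_n$, hence regularity, so the blow-up case never needs to be examined. Your direct row-sum computation on $P^{-(k-1)}\mathcal{Q}(\mathcal{H})P$ also works and is self-contained, but your equality discussion takes an unnecessary detour through the blow-up structure before reaching the decisive observation, which you could have stated at the outset: for $i\ge 2$ one always has $r_i(P^{-(k-1)}\mathcal{Q}(\mathcal{H})P) \le d_i + d_1^{1/k}d_2^{1-1/k} \le d_2 + d_1^{1/k}d_2^{1-1/k} < d_1 + d_1^{1/k}d_2^{1-1/k}$ whenever $d_1>d_2$, so the row sums cannot all equal the maximum and Lemma~\ref{rowsum} gives strict inequality. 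For part~(1), your final ``most simply'' suggestion (monotonicity $\mathcal{D}(\mathcal{H})\le\mathcal{Q}(\mathcal{H})$ plus Lemma~\ref{increasing}) is exactly what the paper does; the earlier alternatives you float are not needed.
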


\begin{proof}
(1) Noting that $\mathcal{D}(\mathcal{H})\leq\mathcal{Q}(\mathcal{H}),$ Lemma
\ref{increasing} implies that $d_{1}=\rho(\mathcal{D}(\mathcal{H}))\leq
\rho(\mathcal{Q}(\mathcal{H})).$

(2) Let $\mathcal{I\ }$be the unit tensor and $\mathcal{D}^{\prime}%
=d_{1}\mathcal{I}$. Then $|\mathcal{Q}(\mathcal{H})|\leq\mathcal{D}^{\prime
}+\mathcal{A(H)}$, and so by Lemma \ref{increasing}, we have $\rho
(\mathcal{Q}(\mathcal{H}))\leq\rho({\mathcal{D}^{\prime}+\mathcal{A(H)}})$. It
is not difficult to see that $\rho(\mathcal{Q}(\mathcal{H}))=\rho
({\mathcal{D}^{\prime}+\mathcal{A(H)}})$ if and only if $d_{1}=d_{n}$. Thus
\[
\rho(\mathcal{Q}(\mathcal{H}))\leq\rho(\mathcal{D}^{\prime}+\mathcal{A(H)}%
)=\rho(\mathcal{D}^{\prime})+\rho(\mathcal{A(H)})=d_{1}+\rho(\mathcal{A(H)}%
)\leq d_{1}+d_{1}^{\frac{1}{k}}d_{2}^{1-\frac{1}{k}}.
\]
Equality holds if and only if $d_{1}=d_{n}$, namely, $\mathcal{H}$ is a
regular hypergraph.
\end{proof}

\begin{theorem}
\label{signless Laplacian} Let $\mathcal{H}$ be a connected $k$-uniform
hypergraph on $n$ vertices, and $b_{i}>0$ for each $1\leq i\leq n$. Then,%

\[
\rho(\mathcal{Q}(\mathcal{H}))\leq\max_{e\in E(\mathcal{H})}\max_{\{i,\text{
}j\}\subseteq e}\frac{d_{i}+d_{j}+\sqrt{(d_{i}-d_{j})^{2}+4b_{i}^{\prime}%
b_{j}^{\prime}}}{2},
\]
where $b_{p}^{\prime}=b_{p}^{-(k-1)}\sum\limits_{\{p,p_{2},\cdots,p_{k}\}\in
E(\mathcal{H})}b_{p_{2}}\cdots b_{p_{k}}$ for any $1\leq p\leq n.$
\end{theorem}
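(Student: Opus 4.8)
The plan is to use a diagonal similarity transformation together with a block-style row-sum estimate, mimicking the classical Kautz/Cvetkovi\'c-type bound $\rho \le \max \tfrac{d_i+d_j+\sqrt{(d_i-d_j)^2+4 m_i m_j}}{2}$ for the signless Laplacian matrix of a graph. First I would set $P=\operatorname{diag}(b_1,\dots,b_n)$ and pass to the diagonal similar tensor $\mathcal{B}=P^{-(k-1)}\mathcal{Q}(\mathcal{H})P$, which by Lemma \ref{similar} has the same spectral radius as $\mathcal{Q}(\mathcal{H})$. A direct computation as in the proof of Theorem \ref{main 1} gives $r_i(\mathcal{B}) = d_i + b_i^{-(k-1)}\sum_{\{i,p_2,\dots,p_k\}\in E(\mathcal{H})} b_{p_2}\cdots b_{p_k} = d_i + b_i'$, since the diagonal entry $d_i$ of the degree tensor is unchanged by the conjugation and the off-diagonal (adjacency) part contributes exactly $b_i'$. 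So $\rho(\mathcal{Q}(\mathcal{H})) \le \max_{1\le i\le n}(d_i+b_i')$; but this is not yet the claimed bound, which is genuinely a ``two-vertex'' quantity.

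To get the sharper two-vertex form, I would not use a fixed diagonal scaling but rather argue on the Perron eigenvector directly, or equivalently optimize the scaling edge-by-edge. Let $x>0$ be the positive eigenvector of $\mathcal{Q}(\mathcal{H})$ for $\rho:=\rho(\mathcal{Q}(\mathcal{H}))$ guaranteed by Lemma \ref{Perron} (the hypergraph is connected, hence $\mathcal{Q}(\mathcal{H})$ is weakly irreducible). Normalize so that $\max_p x_p b_p^{-1}$ is attained; more usefully, pick the vertex $i$ maximizing $y_i := x_i / b_i$ and then, among edges through $i$, pick the edge $e$ and the second vertex $j\in e$ that makes the bound tight. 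Writing out the $i$-th eigenequation $(\mathcal{Q}(\mathcal{H})x)_i = \rho\, x_i^{k-1}$ and splitting the sum into the diagonal term $d_i x_i^{k-1}$ and the adjacency terms $\sum_{\{i,p_2,\dots,p_k\}\in E} x_{p_2}\cdots x_{p_k}$, I would bound each monomial $x_{p_2}\cdots x_{p_k}$ using $x_p \le y_i b_p$ for all $p$, except I must be careful: to produce a $d_j$ and a square root I need to keep track of one distinguished neighbour. The cleanest route is the standard one: choose $i$ with $y_i = \max_p y_p$, choose a neighbour $j$ (in some edge with $i$) with $y_j = \max\{y_p : p \sim i\}$ second-largest along that structure, bound the $i$-equation by $\rho y_i^{k-1} \le d_i y_i^{k-1} + b_i' y_i^{k-2} y_j$ (grouping $k-2$ factors at level $y_i$ and one at level $y_j$ — here is where I must justify that at most this much can be extracted, using that $\{i,p_2,\dots,p_k\}$ is a $k$-set containing $i$), and symmetrically bound the $j$-equation by $\rho y_j^{k-1} \le d_j y_j^{k-1} + b_j' y_j^{k-2} y_i$. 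Dividing through by $y_i^{k-2}$ and $y_j^{k-2}$ respectively gives a $2\times 2$ system $\rho y_i \le d_i y_i + b_i' y_j$, $\rho y_j \le d_j y_j + b_j' y_i$, i.e. $(\rho-d_i)(\rho-d_j) \le b_i' b_j'$ (after checking $\rho \ge d_i, d_j$, which follows from Corollary \ref{lower bound}(1) or a direct argument), and solving this quadratic inequality for $\rho$ yields exactly $\rho \le \tfrac{d_i+d_j+\sqrt{(d_i-d_j)^2+4 b_i' b_j'}}{2}$, which is dominated by the claimed maximum over edges $e$ and pairs $\{i,j\}\subseteq e$.

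The main obstacle I anticipate is the monomial-grouping step: in a graph each adjacency term is a single $x_j$, so the reduction to a $2\times 2$ system is immediate, but in a $k$-uniform hypergraph each adjacency term is a product $x_{p_2}\cdots x_{p_k}$ of $k-1$ entries, and I must argue that $\sum_{\{i,p_2,\dots,p_k\}\in E} x_{p_2}\cdots x_{p_k} \le b_i' \cdot y_i^{k-2} y_j$ for an appropriate choice of $j$. This requires choosing $i$ to be a global maximizer of $y_p=x_p/b_p$ so that every $x_p \le y_i b_p$, then replacing all but one of the factors in each monomial by the bound $x_{p_\ell}\le y_i b_{p_\ell}$ and keeping the remaining factor as $x_{p_2}\le y_j b_{p_2}$ where $j$ is chosen so that $y_j$ dominates the relevant entries; one then recognizes $b_i^{-(k-1)}\sum b_{p_2}\cdots b_{p_k} = b_i'$ and the product of the retained $b$'s. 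Making the choice of $j$ uniform across all edges through $i$ (so the same $j$ works, and $j$ itself is then the natural second vertex to run the symmetric argument on) is the delicate point; the symmetric bound at $j$ needs $i$ to be among $j$'s neighbours, which holds since $i\sim j$ by construction, and needs $y_i$ to dominate, which holds since $i$ is the global maximizer. Once the two scalar inequalities are in hand, the quadratic manipulation and the case analysis $\rho \ge \max(d_i,d_j)$ are routine.
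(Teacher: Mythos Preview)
Your overall strategy is exactly the paper's: pass to the diagonal-similar tensor $B^{-(k-1)}\mathcal{Q}B$ with $B=\operatorname{diag}(b_1,\dots,b_n)$ (equivalently, work with $y_p=x_p/b_p$ for the Perron eigenvector $x$ of $\mathcal{Q}$), choose $i$ with $y_i$ maximal, choose $j$ with $y_j$ maximal among the neighbours of $i$, extract two scalar inequalities from the eigenequations at $i$ and $j$, multiply them, and solve the resulting quadratic $(\rho-d_i)(\rho-d_j)\le b_i'b_j'$.

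However, your stated intermediate bounds are wrong in a way that matters. For the $i$-equation you write $\rho y_i^{k-1}\le d_i y_i^{k-1}+b_i'\,y_i^{k-2}y_j$, i.e.\ you bound $k-2$ of the factors $y_{p_\ell}$ by $y_i$ and one by $y_j$. In fact every $p_\ell$ appearing there is a neighbour of $i$, so $y_{p_\ell}\le y_j$ for all of them, and the sharper (and still valid) bound is $b_i'\,y_j^{k-1}$. The real problem is the $j$-equation: you claim $\rho y_j^{k-1}\le d_j y_j^{k-1}+b_j'\,y_j^{k-2}y_i$, which would require $k-2$ of the neighbours of $j$ to satisfy $y_{p_\ell}\le y_j$. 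That is not guaranteed --- neighbours of $j$ other than $i$ can perfectly well have $y$-values strictly between $y_j$ and $y_i$. The only thing you know about them is $y_{p_\ell}\le y_i$, so the correct bound at $j$ is $(\rho-d_j)\,y_j^{k-1}\le b_j'\,y_i^{k-1}$. With these corrected exponents the reduction to a $2\times2$ system via ``dividing by $y_i^{k-2}$ and $y_j^{k-2}$'' no longer works; instead, and this is what the paper does, you simply multiply the two inequalities $(\rho-d_i)\le b_i'\,y_j^{k-1}/y_i^{k-1}$ and $(\rho-d_j)\,y_j^{k-1}\le b_j'\,y_i^{k-1}$ (after noting $\rho\ge d_i,d_j$) to get $(\rho-d_i)(\rho-d_j)\,y_j^{k-1}\le b_i'b_j'\,y_j^{k-1}$, and the factors $y_i^{k-1}$, $y_j^{k-1}$ cancel cleanly. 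So the fix is to abandon the ``$k-2$ factors at one level, one at the other'' grouping and instead bound \emph{all} $k-1$ factors at $i$ by $y_j$ and \emph{all} $k-1$ factors at $j$ by $y_i$; the asymmetry in the exponents then disappears upon multiplication.
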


\begin{proof}
Write $\mathcal{Q}(\mathcal{H})=\mathcal{Q}$ and $\rho(\mathcal{Q}%
(\mathcal{H}))=\rho$ for short. Let $B=diag(b_{1},b_{2},\cdots,b_{n})$ and
$b_{i}>0$ for any $1\leq i\leq n.$ By Lemma \ref{similar} we know that
$\rho(B^{-(k-1)}\mathcal{Q}B)=\rho$. By (1) we have have%
\[
(B^{-(k-1)}\mathcal{Q}B)_{i_{1}i_{2}\cdots i_{k}}=B_{i_{1}i_{1}}%
^{-(k-1)}\mathcal{Q}_{_{i_{1}i_{2}\cdots i_{k}}}B_{i_{2}i_{2}}\cdots
B_{i_{k}i_{k}}.
\]
Since $\mathcal{H}$ is connected, the tensor $\mathcal{Q}$ and so the tensor
$B^{-(k-1)}\mathcal{Q}B$ is weakly irreducible. By Lemma \ref{Perron} we know
that $\rho$ is an H-eigenvalue of $B^{-(k-1)}\mathcal{Q}B$ and there exists a
positive eigenvector corresponding to $\rho$, denoted by $x$. We may suppose
that $x_{i}=1,x_{p}\leq1$ for any vertex $p$ different from $i$. Let
\[
x_{j}=\max\{x_{p}:\{i,p\}\subseteq e\in E(\mathcal{H})\}.
\]
From the definitions of eigenvalue and eigenvector (see (3)), we have%

\[
(B^{-(k-1)}\mathcal{Q}B)x=\rho x^{[k-1]}.
\]
For any vertex $p$ we have%
\[
((B^{-(k-1)}\mathcal{Q}B)x)_{p}=\rho x_{p}^{k-1}.
\]
By (1) we have
\[
\sum\limits_{p_{2},\cdots,p_{k}\in\lbrack n]}(B^{-(k-1)}\mathcal{Q}%
B)_{pp_{2}\cdots p_{k}}x_{p_{2}}\cdots x_{p_{k}}=\rho x_{p}^{k-1},
\]
then,%
\[
d_{p}x_{p}^{k-1}+\sum\limits_{\{p,p_{2},\cdots,p_{k}\}\in E(\mathcal{H})}%
b_{p}^{-(k-1)}b_{p_{2}}\cdots b_{p_{k}}x_{p_{2}}\cdots x_{p_{k}}=\rho
x_{p}^{k-1}.
\]
Hence we have%
\begin{equation}
(\rho-d_{p})x_{p}^{k-1}=b_{p}^{-(k-1)}\sum\limits_{\{p,p_{2},\cdots,p_{k}\}\in
E(\mathcal{H})}b_{p_{2}}\cdots b_{p_{k}}x_{p_{2}}\cdots x_{p_{k}}. \label{p}%
\end{equation}
Recall that for any vertex $p$
\[
b_{p}^{\prime}=b_{p}^{-(k-1)}\sum\limits_{\{p,p_{2},\cdots,p_{k}\}\in
E(\mathcal{H})}b_{p_{2}}\cdots b_{p_{k}}.
\]
Now take $p=i$ in (\ref{p}), then we obtain%
\[%
\begin{split}
\rho-d_{i}  &  =b_{i}^{-(k-1)}\sum\limits_{\{i,i_{2},\cdots,i_{k}\}\in
E(\mathcal{H})}b_{i_{2}}\cdots b_{i_{k}}x_{i_{2}}\cdots x_{i_{k}}\\
&  \leq b_{i}^{-(k-1)}\sum\limits_{\{i,i_{2},\cdots,i_{k}\}\in E(\mathcal{H}%
)}b_{i_{2}}\cdots b_{i_{k}}x_{j}^{k-1},\\
&  =b_{i}^{\prime}x_{j}^{k-1}.
\end{split}
\]
And take $p=j$ in (\ref{p}), then we have
\[%
\begin{split}
(\rho-d_{j})x_{j}^{k-1}  &  =b_{j}^{-(k-1)}\sum\limits_{\{j,j_{2},\cdots
,j_{k}\}\in E(\mathcal{H})}b_{j_{2}}\cdots b_{j_{k}}x_{j_{2}}\cdots x_{j_{k}%
}\\
&  \leq b_{j}^{-(k-1)}\sum\limits_{\{j,j_{2},\cdots,j_{k}\}\in E(\mathcal{H}%
)}b_{j_{2}}\cdots b_{j_{k}},\\
&  =b_{j}^{\prime}.
\end{split}
\]
Now we obtain%
\[
\rho-d_{i}\leq b_{i}^{\prime}x_{j}^{k-1}\text{ and }(\rho-d_{j})x_{j}%
^{k-1}\leq b_{j}^{\prime}.
\]
Noting that $\rho\geq d_{p}$ (see Corollary \ref{lower bound}), multiplying
the left and right sides of the two inequalities, respectively, we have,%
\[
(\rho-d_{i})(\rho-d_{j})x_{j}^{k-1}\leq b_{i}^{\prime}b_{j}^{\prime}%
x_{j}^{k-1}.
\]
Thus we have,%
\[
\rho^{2}-(d_{i}+d_{j})\rho+d_{i}d_{j}-b_{i}^{\prime}b_{j}^{\prime}\leq0,
\]
and then%
\[
\rho\leq{\frac{d_{i}+d_{j}+\sqrt{(d_{i}-d_{j})^{2}+4b_{i}^{\prime}%
b_{j}^{\prime}}}{2}.}%
\]
So we have proved that
\[
\rho(\mathcal{Q}(\mathcal{H}))\leq\max_{e\in E(\mathcal{H})}\max_{\{i,\text{
}j\}\subseteq e}\frac{d_{i}+d_{j}+\sqrt{(d_{i}-d_{j})^{2}+4b_{i}^{\prime}%
b_{j}^{\prime}}}{2}.
\]

The proof is completed.
\end{proof}

For each $i$, if we take $b_{i}=1$, then $b_{i}^{\prime}=d_{i}$ in Theorem
\ref{signless Laplacian}, we may obtain the following result.

\begin{corollary}
Let $\mathcal{H}$ be a $k$-uniform hypergraph. Then we have
\[
\rho(Q(\mathcal{H}))\leq\max_{e\in E(\mathcal{H})}\max_{\{i,\text{
}j\}\subseteq e}(d_{i}+d_{j}).
\]

\end{corollary}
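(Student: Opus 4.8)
The final statement is a corollary obtained from Theorem \ref{signless Laplacian} by the specialization $b_i = 1$ for all $i$, so the plan is essentially to verify that this substitution collapses the general bound to the claimed one. First I would set $b_i = 1$ for every $1 \leq i \leq n$ in the definition $b_p^{\prime} = b_p^{-(k-1)}\sum_{\{p,p_2,\cdots,p_k\}\in E(\mathcal{H})} b_{p_2}\cdots b_{p_k}$; since every factor is then $1$, the sum counts precisely the edges containing $p$, i.e. $b_p^{\prime} = d_p$. This is the only real content, and it is immediate.

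Next I would substitute $b_i^{\prime} = d_i$ and $b_j^{\prime} = d_j$ into the expression $\frac{d_i + d_j + \sqrt{(d_i-d_j)^2 + 4 b_i^{\prime} b_j^{\prime}}}{2}$ appearing in Theorem \ref{signless Laplacian}, and then simplify the discriminant: $(d_i - d_j)^2 + 4 d_i d_j = d_i^2 - 2 d_i d_j + d_j^2 + 4 d_i d_j = d_i^2 + 2 d_i d_j + d_j^2 = (d_i + d_j)^2$. Hence $\sqrt{(d_i-d_j)^2 + 4 b_i^{\prime} b_j^{\prime}} = d_i + d_j$ (using $d_i, d_j \geq 0$), and the whole quantity becomes $\frac{d_i + d_j + (d_i + d_j)}{2} = d_i + d_j$.

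Finally, I would invoke Theorem \ref{signless Laplacian} directly with this choice of weights: taking the maximum over $e \in E(\mathcal{H})$ and over pairs $\{i,j\} \subseteq e$ of the right-hand side, which has just been shown to equal $d_i + d_j$, yields
\[
\rho(\mathcal{Q}(\mathcal{H})) \leq \max_{e \in E(\mathcal{H})} \max_{\{i,\ j\} \subseteq e} (d_i + d_j),
\]
as desired. One small caveat is that Theorem \ref{signless Laplacian} is stated for connected hypergraphs, whereas the corollary omits the word "connected"; if one wishes to be scrupulous, the general case follows by applying the connected version to each connected component of $\mathcal{H}$ and noting that $\rho(\mathcal{Q}(\mathcal{H}))$ equals the maximum of $\rho(\mathcal{Q})$ over the components, each of which is bounded by a term of the form $d_i + d_j$ with $\{i,j\}$ inside an edge of that component, hence inside an edge of $\mathcal{H}$. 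There is no real obstacle here; the ``hard part'', such as it is, amounts only to checking the algebraic identity for the discriminant, which is routine.
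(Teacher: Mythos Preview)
Your proof is correct and follows exactly the paper's approach: the paper simply remarks that taking $b_i=1$ gives $b_i'=d_i$ in Theorem~\ref{signless Laplacian}, leaving the algebraic simplification implicit, whereas you spell it out. Your added remark on connectedness is a nice scruple the paper does not address.
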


For a vertex $i$ of the $k$-uniform hypergraph $\mathcal{H}$, denote by
\[
m_{i}=\frac{\sum\limits_{\{i,i_{2},\cdots,i_{k}\}\in E(\mathcal{H})}d_{i_{2}%
}\cdots d_{i_{k}}}{d_{i}^{k-1}},
\]
which is a generalization of the average of degrees of vertices adjacent to
$i$ of\ the simple graph.

For each $i$, if we take $b_{i}=d_{i}$, then $b_{i}^{\prime}=m_{i}$ in Theorem
\ref{signless Laplacian}, we may obtain the following result.

\begin{corollary}
Let $\mathcal{H}$ be a $k$-uniform hypergraph. Then we have
\[
\rho(Q(\mathcal{H}))\leq\max_{e\in E(\mathcal{H})}\max_{\{i,\text{
}j\}\subseteq e}\frac{d_{i}+d_{j}+\sqrt{(d_{i}-d_{j})^{2}+4m_{i}m_{j}}}{2}.
\]

\end{corollary}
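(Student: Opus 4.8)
The plan is to obtain this bound as an immediate specialization of Theorem~\ref{signless Laplacian}, taking the free parameters $b_i$ to be the degrees $d_i$. The first thing to check is that this choice is admissible and that it produces exactly the quantity $m_i$ in place of $b_i'$. Since Theorem~\ref{signless Laplacian} requires $b_i>0$ for every vertex and also assumes $\mathcal{H}$ connected, I would begin with a harmless reduction: if $\mathcal{H}$ has no edges the claim is trivial (both sides vanish, reading the right-hand maximum over the empty edge set as $0$); otherwise, the signless Laplacian tensor of a disconnected hypergraph is block diagonal along its connected components, so $\rho(\mathcal{Q}(\mathcal{H}))$ equals the maximum of $\rho(\mathcal{Q}(\mathcal{C}))$ over those components $\mathcal{C}$, while isolated vertices contribute only the eigenvalue $0$, and the right-hand side of the asserted inequality likewise splits as a maximum over components. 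This lets me assume $\mathcal{H}$ is connected with at least one edge, so that $d_i\ge 1$ for all $i$.

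With all degrees positive I would set $b_i=d_i$ for each $i$ and compute the corresponding $b_i'$ from its definition in Theorem~\ref{signless Laplacian}:
\[
b_i'=b_i^{-(k-1)}\sum_{\{i,i_2,\cdots,i_k\}\in E(\mathcal{H})} b_{i_2}\cdots b_{i_k}
=d_i^{-(k-1)}\sum_{\{i,i_2,\cdots,i_k\}\in E(\mathcal{H})} d_{i_2}\cdots d_{i_k}=m_i,
\]
which is precisely the definition of $m_i$. Substituting $b_i'=m_i$ and $b_j'=m_j$ into the conclusion of Theorem~\ref{signless Laplacian} yields
\[
\rho(\mathcal{Q}(\mathcal{H}))\le\max_{e\in E(\mathcal{H})}\max_{\{i,j\}\subseteq e}\frac{d_i+d_j+\sqrt{(d_i-d_j)^2+4m_im_j}}{2},
\]
and passing back to a general, possibly disconnected, $\mathcal{H}$ by taking the maximum over its connected components (and discarding isolated vertices) finishes the proof.

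There is essentially no genuinely hard step here: all the analytic content is already contained in Theorem~\ref{signless Laplacian}. The only point requiring a little attention is the admissibility of the substitution $b_i=d_i$, namely guaranteeing that no vertex has degree $0$ and that the hypergraph is connected so that the theorem applies; this is exactly why I would dispose of the empty, disconnected, and isolated-vertex cases at the outset. If one were content to state the corollary only for connected hypergraphs with minimum degree at least $1$, even this bookkeeping disappears and the proof is literally the single substitution $b_i=d_i$ in Theorem~\ref{signless Laplacian}.
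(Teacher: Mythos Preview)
Your proof is correct and follows exactly the paper's approach: the paper simply remarks that taking $b_i=d_i$ in Theorem~\ref{signless Laplacian} gives $b_i'=m_i$ and hence the stated bound. Your additional reduction to the connected case (to ensure the hypotheses $b_i>0$ and connectedness of Theorem~\ref{signless Laplacian} are met) is a legitimate piece of bookkeeping that the paper omits.
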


\begin{remark}
Take $B=diag(1,1,\cdots,1)$ and use the similar arguments as that in Theorem
\ref{signless Laplacian} for the tensor $B^{-(k-1)}\mathcal{A(H)}B$, then we
may obtain that
\[
\rho(\mathcal{A(H)})\leq\max_{e\in E(\mathcal{H})}\max_{\{i,\text{
}j\}\subseteq e}\sqrt{d_{i}d_{j}}.
\]

\end{remark}

If take $B=diag(d_{1},d_{2},\cdots,d_{n})$, then we may prove that
\[
\rho(\mathcal{A(H)})\leq\max_{e\in E(\mathcal{H})}\max_{\{i,\text{
}j\}\subseteq e}\sqrt{m_{i}m_{j}}.
\]

%参考文献中注意以下几点：%
%(1) 文献结束后不要句点;人名缩写需空开; 卷号与年份之间空开，如：2 (2003)； 参考文献末一般按卷期年页的顺序排，如：7(13), 2011, 4963-4971%
%(2) 文章名句首字母大写，其他小写；期刊名[J]或会议名各单词首字母大写，介词冠词除外%
%(3) 文献中作者要名前，姓后，姓除首字母大写外，其他小写。如有缩略名，缩略名后要加句点。例如 M. Treumuth, H. Fan.%

\end{document}